\theoremstyle{plain}
\newtheorem{theorem}{Theorem}
\theoremstyle{plain}
\newtheorem{lemma}{Lemma}
\theoremstyle{plain}
\theoremstyle{plain}
\theoremstyle{plain}
\newtheorem{definition}{Definition}
\theoremstyle{plain}
\newtheorem{remark}{Remark}
\newtheorem{question}{Question}
\begin{document}

\title{A representing system generated by the Szeg\"{o} kernel for the Hardy space}

\author{Konstantin S. Speransky, Pavel A. Terekhin}

\affil{Saratov State University, Russia\\
konstantin.speransky@gmail.com, terekhinpa@mail.ru}

\date{}

\maketitle

\begin{abstract}
In this paper we give an explicit construction of a representing system generated by the Szeg\"{o} kernel for the Hardy space. Thus we answer an open question posed by Fricain, Khoi and Lef\`evre. We use frame theory to prove the main result.

Keywords: representing system, frame, reproducing kernel Hilbert space, Szeg\"{o} kernel, Hardy space.
\end{abstract}

\section{The main result}

The aim of this paper is to give an affirmative answer to the following question raised by Fricain, Khoi and Lef\`evre \cite{FKL}. Here and subsequently, $K_{\lambda_n}(\cdot)=K(\cdot,\lambda_n)$, $n=1,2,\dots$ is a sequence generated by the Szeg\"{o} kernel $K(z,\lambda)=(1-\overline{\lambda}z)^{-1}$.

\begin{question}\label{ST_q1}
Can we construct a sequence of points $\{\lambda_n\}_{n=1}^{\infty}$ in the open unit disk $\mathbb{D}$ so that $\{K_{\lambda_n}\}_{n=1}^{\infty}$ forms a representing system for $H^2(\mathbb{D})$?
\end{question}

It is well-known that the sequence $\{K_{\lambda_n}\}_{n=1}^{\infty}$ is not a basis for the Hardy space $H^2(\mathbb{D})$ for any set of points $\{\lambda_n\}_{n=1}^{\infty}$. Moreover, the normalized sequence $\{(1-|\lambda_n|^2)^{1/2}K_{\lambda_n}\}_{n=1}^{\infty}$ can not be a Duffin-Schaeffer frame for $H^2(\mathbb{D})$. Nevertheless, we will show that question \ref{ST_q1} has a positive answer.

Let $\{\lambda_n\}_{n=1}^{\infty}$ be a sequence of points on the unit disk $\mathbb{D}$.
We partition $\{\lambda_n\}_{n=1}^{\infty}$ into groups, so each group consists of $k^{th}$ roots of unity placed on a circle with a radius $r_k=1-\frac1k$
\begin{equation}\label{ST_eq2}
\lambda_n=\lambda_{k,j}=(1-\tfrac1k)e^{\frac{2\pi ij}{k}}, \qquad j=0,\dots,k-1, \qquad k=1,2,\dots.
\end{equation}

\begin{theorem}\label{ST_t1}
Let $\{\lambda_n\}_{n=1}^{\infty}$ be given by \eqref{ST_eq2}.
Then the sequence $\{K_{\lambda_n}\}_{n=1}^{\infty}$ is a representing system for the Hardy space $H^2(\mathbb{D})$.
\end{theorem}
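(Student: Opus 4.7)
The plan is to give an explicit iterative construction. The key observation is that within the $k$-th group, the discrete Fourier transform of the kernels yields
\[
u_n^{(k)}(z):=\frac{1}{k}\sum_{j=0}^{k-1}e^{2\pi ijn/k}K_{\lambda_{k,j}}(z)=\frac{r_k^n z^n}{1-r_k^k z^k},\qquad 0\le n<k,
\]
so the linear span of the $k$-th group is the $k$-dimensional space $\{Q(z)\phi_k(z):\deg Q<k\}$, where $\phi_k=(1-r_k^k z^k)^{-1}$. The crucial arithmetic inequality is $r_k^k=(1-1/k)^k\le 1/e$ for every $k\ge 1$, which gives $r_k^{2k}/(1-r_k^{2k})\le 1/(e^2-1)<1/2$.

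For a given $f=\sum c_n z^n\in H^2$ I would build the series iteratively. Choose an increasing sequence $k_1<k_2<\cdots$ to be specified later, put $f^{(0)}=f$, and at step $s$ let $Q_{k_s}$ be the polynomial of degree less than $k_s$ whose coefficients coincide with the first $k_s$ Taylor coefficients of the current residual $f^{(s-1)}$, and set $f^{(s)}=f^{(s-1)}-Q_{k_s}\phi_{k_s}$. Since $\deg Q_{k_s}<k_s$, the Fourier supports of $z^{jk_s}Q_{k_s}$ for distinct $j\ge 0$ are disjoint, and an elementary computation produces
\[
\|f^{(s)}\|^2\le 2\bigl\|\Pi_{\ge k_s}f^{(s-1)}\bigr\|^2+\frac{2\,r_{k_s}^{2k_s}}{1-r_{k_s}^{2k_s}}\bigl\|\Pi_{<k_s}f^{(s-1)}\bigr\|^2,
\]
where $\Pi_{<k}$ is Taylor truncation. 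The coefficient of the second term is at most $2/(e^2-1)<1$, so choosing $k_s$ large enough that the tail is summable in $s$ yields $\|f^{(s)}\|\to 0$, and therefore $f=\sum_s Q_{k_s}\phi_{k_s}$ in $H^2$.

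The last step is to re-express each block $Q_{k_s}\phi_{k_s}$ as a linear combination of $\{K_{\lambda_{k_s,j}}\}_{j=0}^{k_s-1}$ by inverting the discrete Fourier transform in the identity above, and to assign zero coefficients to all kernels in the skipped groups. The resulting series $\sum_n c_n K_{\lambda_n}$ has partial sums at the end of group $k_s$ equal to $f-f^{(s)}$, which tends to $f$ as $s\to\infty$. To control partial sums that stop inside a block I use a frame-theoretic bound: the Gram matrix of $\{K_{\lambda_{k,j}}\}_{j=0}^{k-1}$ is circulant with eigenvalues $k\,r_k^{2m}/(1-r_k^{2k})$ for $m=0,\dots,k-1$, so its operator norm is $k/(1-r_k^{2k})$; meanwhile Plancherel for the finite DFT gives $\sum_j|c_{k_s,j}|^2\le (e^2/k_s)\|f^{(s-1)}\|^2$. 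Combining these, every intermediate partial sum within group $k_s$ has $H^2$-norm bounded by a universal constant times $\|f^{(s-1)}\|$, which tends to zero.

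The main obstacle is that $\phi_k$ does not tend to $1$ in $H^2$-norm as $k\to\infty$: the norm of $\phi_k-1$ stays bounded below by a positive constant independent of $k$. Consequently no single group can approximate an arbitrary $f$ to arbitrary precision, and the required decay of the residual must be extracted through the iteration. The sharp inequality $r_k^{2k}/(1-r_k^{2k})<1/2$ is exactly what makes the recursion contractive.
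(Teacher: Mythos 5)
Your construction is correct, but it takes a genuinely different route from the paper. The paper never assembles the series by hand: it proves the two-sided sampling inequality $\|f\|_{H^2}\le\sup_k\|\sigma_{1-1/k}f\|_k\le(1-e^{-2})^{-1/2}\|f\|_{H^2}$ (via exactness of the discrete norm $\|\cdot\|_k$ on polynomials of degree less than $k$ and the estimate of Lemma~\ref{ST_l4}), reads this as the Banach-frame inequality for the normalized kernels with respect to $Y=\ell^\infty(\ell^2_k)$, and then invokes the abstract equivalence of Lemma~\ref{ST_l2} (frame with respect to $X=\ell^1(\ell^2_k)$ $\Rightarrow$ representing system), which is quoted from Terekhin's work rather than proved. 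You instead diagonalize each group by the finite DFT, identify its span as $\{Q\phi_k:\deg Q<k\}$ with $\phi_k=(1-r_k^kz^k)^{-1}$, and run a direct contraction iteration: subtracting $Q_{k_s}\phi_{k_s}$ with $Q_{k_s}=\Pi_{<k_s}f^{(s-1)}$ leaves a residual controlled by the Taylor tail plus the factor $2r_k^{2k}/(1-r_k^{2k})\le 2/(e^2-1)<1$, which rests on $(1-1/k)^k\le e^{-1}$ --- the same quantitative fact that drives the paper's Lemma~\ref{ST_l5}. Your circulant Gram-matrix computation (eigenvalues $k\,r_k^{2m}/(1-r_k^{2k})$) combined with DFT Plancherel and the bound $r_k^{-(k-1)}\le e$ does give $\sum_j|c_{k,j}|^2\le(e^2/k)\|f^{(s-1)}\|^2$, so the intermediate partial sums inside a block are indeed $O(\|f^{(s-1)}\|)$ and the full series converges to $f$; the skipped groups with zero coefficients cause no harm. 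What your route buys is self-containedness and an explicit algorithm, with no appeal to the unproved frame/representing-system equivalence, and it even shows the coefficients can be supported on a sparse subsequence of groups. What the paper's route buys is the stronger structural conclusion that the normalized kernels form a frame for $H^2$ with respect to the mixed-norm space $\ell^1(\ell^2_k)$, i.e.\ a representation with coefficient-norm control, of which the representing-system property is a corollary.
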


Note that any representing system is obviously a complete sequence. By Szeg\"{o}'s zero-point  theorem, the completeness of the sequence $\{K_{\lambda_n}\}_{n=1}^{\infty}$ is equivalent to the Blaschke condition being false, i.e.
\begin{equation}\label{ST_eq3}
\sum_{n=1}^{\infty}(1-|\lambda_n|)=\infty.
\end{equation}

At the same time, by the recovery theorem of Totik \cite{Tot}, if condition \eqref{ST_eq3} holds then there exist polynomials $P_{n,k}$, where $k=1,\dots,n$ and $n=1,2,\dots$, such that for every $f\in H^2(\mathbb{D})$ we have
$$
f=\lim_{n\to\infty}\sum_{k=1}^nf(\lambda_k)P_{n,k}.
$$

Of course, this approximation does not provide the representation by series of the form
$$
f=\sum_{n=1}^{\infty}x_nK_{\lambda_n}.
$$

Speaking informally, we must take ``too many'' points $\{\lambda_n\}_{n=1}^{\infty}$ to get the representation above and \eqref{ST_eq2} is one such choice of points.

\section{Representing systems and frames}\label{ST_s3}

This section is devoted to the study of the relationship between representing systems and frames.

\begin{definition}\label{ST_d1}
A sequence $\{f_n\}_{n=1}^{\infty}\subset F$ is a representing system for a Banach space $F$ if for every $f\in F$ there are coefficients $x_n$, $n=1,2,\dots$ such that
$$
f=\sum_{n=1}^{\infty}x_nf_n.
$$
\end{definition}

The following notion of a frame for a Hilbert space was introduced by Duffin and Schaeffer \cite{DS}.

\begin{definition}\label{ST_d2}
A sequence  $\{f_n\}_{n=1}^{\infty}\subset H\setminus\{0\}$ is a Duffin-Schaeffer frame for a Hilbert space $H$ if there exist constants $0<A\le B<\infty$ such that for all $f\in H$
$$
A\|f\|_H^2\le\sum_{n=1}^{\infty}|\langle f,f_n\rangle|^2\le B\|f\|_H^2.
$$
\end{definition}

It is known that every Duffin-Schaeffer frame $\{f_n\}_{n=1}^{\infty}$ is a representing system for a Hilbert space $H$
(see \cite[Theorem 5.1.6]{Chr}).

Usually, when the notion of a Duffin-Schaeffer frame is generalized to the case of a Banach space $F$, the duality $\langle f,g_n\rangle$ is considered as the values of the functionals $g_n\in G:=F^*$ at $f\in F$. Using this approach, the notions of an atomic decomposition and a Banach frame were introduced by Gr\"{o}chenig \cite{Gro}.

For our purposes, it is more convenient to introduce the dual definitions by considering the Fourier coefficients $\langle f_n,g\rangle$ of a functional $g\in G$ with respect to a sequence $\{f_n\}_{n=1}^{\infty}$ in the original space $F$.

Let $X$ be a sequence Banach space with a natural basis $\{e_n\}_{n=1}^{\infty}$ ($e_n=\{\delta_{mn}\}_{m=1}^{\infty}$ where $\delta_{mn}$ is the Kronecher delta). Therefore, the dual space $X^*$ is isomorphic to some sequence Banach space $Y$.

As before, $F$ is a Banach space and $G:=F^*$ is its dual space.

\begin{definition}\label{ST_d4}
We say that a sequence $\{f_n\}_{n=1}^{\infty}\subset F\setminus\{0\}$ of elements of a Banach space $F$ is a frame for $F$ with respect to $X$ if there exist constants
$0<A\le B<\infty$ such that for all bounded linear functionals $g\in G$ the following inequalities are satisfied
\begin{equation}\label{ST_eq4}
A\|g\|_G\le\|\{\langle f_n,g \rangle\}_{n=1}^{\infty}\|_Y\le B\|g\|_G.
\end{equation}
\end{definition}

If we take $F=G=H$ to be a Hilbert space and $X=Y=\ell^2$, we get a Duffin-Schaeffer frame.

\begin{lemma}\label{ST_l2}
A sequence $\{f_n\}_{n=1}^{\infty}\subset F\setminus\{0\}$ is a frame for $F$ with respect to $X$ if and only if the following two assertions hold

(i) for all $x\in X$ the series $\sum_{n=1}^{\infty}x_nf_n$ converges in $F$,

(ii) for all $f\in F$ there is an $x\in X$ such that $f=\sum_{n=1}^{\infty}x_nf_n$.
\end{lemma}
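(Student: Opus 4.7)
The plan is to work through the synthesis operator $T\colon X\to F$ defined by $Tx=\sum_{n=1}^{\infty}x_n f_n$, together with its partial truncations $T_N x=\sum_{n=1}^{N}x_n f_n$. Each $T_N$ is bounded (it is a finite combination of the bounded coordinate functionals provided by the Schauder basis $\{e_n\}$ of $X$), and a direct calculation identifies its adjoint $T_N^{*}\colon G\to Y$ with the truncated analysis map $g\mapsto(\langle f_1,g\rangle,\dots,\langle f_N,g\rangle,0,0,\dots)$. The entire lemma will follow by reading off properties of $T$ and $T^{*}$ from the two frame inequalities.

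For the direction ``frame $\Rightarrow$ (i), (ii)'', I would first establish (i) by showing that $\{T_N x\}$ is Cauchy in $F$ for every $x\in X$. Duality gives
\[
\|T_N x-T_M x\|_F
=\sup_{\|g\|_G\le 1}\Bigl|\sum_{n=M+1}^{N}x_n\langle f_n,g\rangle\Bigr|
=\sup_{\|g\|_G\le 1}\bigl|\langle P_N x-P_M x,\{\langle f_n,g\rangle\}_n\rangle_{X,Y}\bigr|,
\]
so the upper frame bound combined with $P_N x\to x$ in $X$ (the basis property) yields $\|T_N x-T_M x\|_F\le B\|P_N x-P_M x\|_X\to 0$. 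Hence $Tx:=\lim_N T_N x$ exists for every $x\in X$, which is (i); at the same time $T$ is bounded with adjoint $T^{*}g=\{\langle f_n,g\rangle\}_n$, so the lower frame bound becomes precisely the statement that $T^{*}$ is bounded below on $G$. From this I conclude that $T$ is surjective, which is exactly (ii).

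For the converse, assume (i) and (ii). Condition (i) together with Banach--Steinhaus applied to $\{T_N\}$ shows that $T$ is a well-defined bounded operator, and then the upper frame bound falls out of $\|\{\langle f_n,g\rangle\}_n\|_Y=\|T^{*}g\|_Y\le\|T\|\,\|g\|_G$. Condition (ii) says $T$ is surjective, so by the open mapping theorem $T^{*}$ is bounded below, which is the lower frame bound.

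The only step that is not a routine manipulation is the duality ``$T$ surjective $\iff$ $T^{*}$ bounded below'', a standard corollary of the open mapping theorem (or Banach's closed range theorem); this is where the Banach-space structure of $F$ enters in an essential way. Everything else amounts to writing the synthesis and analysis operators explicitly and using the Schauder basis property of $\{e_n\}$ to interchange partial sums with norm limits.
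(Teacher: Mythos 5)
Your argument is correct: the synthesis operator $T$ and its adjoint (the analysis map $g\mapsto\{\langle f_n,g\rangle\}_n$), the upper bound giving convergence/boundedness via the Schauder basis of $X$, and the equivalence ``$T$ surjective $\iff$ $T^{*}$ bounded below'' via the open mapping/closed range theorem are exactly what is needed, with the isomorphism $X^{*}\cong Y$ only affecting the constants $A,B$. The paper itself omits the proof of this lemma, deferring to the cited works of Terekhin, where the standard argument is essentially this same operator-theoretic one, so your proposal matches the intended approach.
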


In particular, any frame is a representing system. The converse is also true: any representing system $\{f_n\}_{n=1}^{\infty}\subset F\setminus\{0\}$ is a frame for $F$ with respect to its coefficients space $X(f_n)$ consisting of all sequences $\{x_n\}_{n=1}^{\infty}$ for which the series $\sum_{n=1}^{\infty}x_nf_n$ converges in $F$. The coefficients space $X(f_n)$ is equipped with the norm
$$
\|x\|_{X(f_n)}=\sup_{N=1,2,\dots}\biggl\|\sum_{n=1}^Nx_nf_n\biggr\|_F.
$$

In general, the same representing system can be a frame with respect to various sequence spaces $X$.

Definition \ref{ST_d4} was introduced in \cite{Ter1}. For more details about proofs in this section we refer the reader to \cite{Ter2} and \cite{Ter3}.

\section{Proof of the theorem}\label{ST_s4}

We have divided the proof into a sequence of lemmas.

\begin{lemma}\label{ST_l3}
For each $k\in\mathbb{N}$, let $\omega_{k}^j$ be a $k^{th}$ root of unity
$$
\omega_{k}^j=e^{\frac{2\pi ij}{k}}, \qquad j=0,\dots,k-1,
$$
and
$$
\|f\|_k:=\biggl(\frac1k\sum_{j=0}^{k-1}|f(\omega_k^j)|^2\biggr)^{1/2}.
$$

For each polynomial $P(z)=\sum_{j=0}^{k-1}c_jz^j$ of degree less than $k$ one has
$$
\|P\|_k=\|P\|_{H^2}.
$$
\end{lemma}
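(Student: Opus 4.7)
The plan is to reduce both sides to expressions in the coefficients $c_0,\dots,c_{k-1}$ and observe they coincide. On the Hardy space side this is immediate: since $\{z^j\}_{j\ge 0}$ is the standard orthonormal basis of $H^2(\mathbb{D})$, we have $\|P\|_{H^2}^2=\sum_{j=0}^{k-1}|c_j|^2$. So the work is to obtain the same expression for the discrete quadrature $\|P\|_k^2$.

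To do that, I would expand
$$
|P(\omega_k^j)|^2 = P(\omega_k^j)\overline{P(\omega_k^j)} = \sum_{m=0}^{k-1}\sum_{n=0}^{k-1}c_m\overline{c_n}\,\omega_k^{j(m-n)},
$$
interchange the finite sums, and compute
$$
\sum_{j=0}^{k-1}|P(\omega_k^j)|^2 = \sum_{m,n=0}^{k-1}c_m\overline{c_n}\sum_{j=0}^{k-1}e^{\frac{2\pi i j(m-n)}{k}}.
$$
The key step is the orthogonality relation for $k$-th roots of unity: the inner sum equals $k$ if $m\equiv n\pmod k$ and $0$ otherwise. Since the indices $m,n$ range in $\{0,\dots,k-1\}$, congruence modulo $k$ forces $m=n$, so only the diagonal terms survive and
$$
\sum_{j=0}^{k-1}|P(\omega_k^j)|^2 = k\sum_{m=0}^{k-1}|c_m|^2.
$$
Dividing by $k$ and taking square roots yields $\|P\|_k=\|P\|_{H^2}$.

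There is no real obstacle here; the lemma is essentially Parseval's identity for the discrete Fourier transform on $\mathbb{Z}/k\mathbb{Z}$, applied under the hypothesis $\deg P<k$ which guarantees that no aliasing occurs (distinct monomials $z^m$ and $z^n$ with $0\le m,n<k$ never get identified modulo $k$). The degree restriction is exactly what makes the inner product given by the $k$-point quadrature agree with the $H^2$ inner product on the span of $1,z,\dots,z^{k-1}$.
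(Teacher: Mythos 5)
Your proof is correct and is essentially the paper's argument: the paper simply invokes the unitarity of the (inverse) discrete Fourier transform, while you spell out the underlying computation via the orthogonality relation $\sum_{j=0}^{k-1}\omega_k^{j(m-n)}=k\,\delta_{mn}$ for $0\le m,n<k$. Same idea, just with the Parseval identity for the DFT proved rather than cited.
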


\begin{proof}
As the inverse discrete Fourier transform
$$
\check{c}_{l}=\frac{1}{\sqrt{k}}\sum_{j=0}^{k-1}c_j\omega_k^{jl}=\frac{P(\omega_k^l)}{\sqrt{k}}, \qquad l=0,\dots,k-1,
$$
is unitary, it follows that
$$
\|P\|_k=\biggl(\sum_{l=0}^{k-1}|\check{c}_l|^2\biggr)^{1/2}=\biggl(\sum_{j=0}^{k-1}|c_j|^2\biggr)^{1/2}=\|P\|_{H^2}.
$$
\end{proof}

\begin{lemma}\label{ST_l4}
Let
$$
\sigma_rf(z)=f(rz), \qquad 0<r<1.
$$
The following inequality holds
$$
\|\sigma_rf\|_k\le\frac{\|f\|_{H^2}}{(1-r^{2k})^{1/2}}.
$$
\end{lemma}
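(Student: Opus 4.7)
My plan is to expand $f$ as a power series, group its coefficients by residue modulo $k$, and then apply Lemma~\ref{ST_l3} to the resulting polynomial in $\omega_k^j$, finishing with a Cauchy--Schwarz bound on each group.

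Concretely, write $f(z)=\sum_{n=0}^{\infty}a_nz^n$ so that $\|f\|_{H^2}^2=\sum_{n=0}^\infty|a_n|^2$, and note that $\sigma_rf(z)=\sum_{n=0}^\infty a_nr^n z^n$. Since $\omega_k^{jn}$ depends only on $n \bmod k$, I would split $n=mk+l$ with $0\le l<k$ and set
$$
b_l=\sum_{m=0}^{\infty}a_{mk+l}\,r^{mk+l}, \qquad l=0,\dots,k-1.
$$
Then $\sigma_rf(\omega_k^j)=\sum_{l=0}^{k-1}b_l\,\omega_k^{jl}$ for each $j$, which means the values $\sigma_rf(\omega_k^j)$ coincide with the values at the $k^{\text{th}}$ roots of unity of the polynomial $P(z)=\sum_{l=0}^{k-1}b_lz^l$ of degree less than $k$. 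By Lemma~\ref{ST_l3} applied to $P$, we obtain
$$
\|\sigma_rf\|_k^2=\|P\|_k^2=\|P\|_{H^2}^2=\sum_{l=0}^{k-1}|b_l|^2.
$$

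It remains to estimate each $|b_l|^2$. By Cauchy--Schwarz,
$$
|b_l|^2\le\biggl(\sum_{m=0}^{\infty}|a_{mk+l}|^2\biggr)\biggl(\sum_{m=0}^{\infty}r^{2(mk+l)}\biggr)=\frac{r^{2l}}{1-r^{2k}}\sum_{m=0}^{\infty}|a_{mk+l}|^2.
$$
Summing over $l=0,\dots,k-1$ and using $r^{2l}\le 1$ gives
$$
\|\sigma_rf\|_k^2\le\frac{1}{1-r^{2k}}\sum_{l=0}^{k-1}\sum_{m=0}^{\infty}|a_{mk+l}|^2=\frac{\|f\|_{H^2}^2}{1-r^{2k}},
$$
from which the claimed inequality follows by taking square roots.

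There is no real obstacle here: the only conceptual step is recognising that grouping Taylor coefficients modulo $k$ turns the evaluation of $\sigma_rf$ at $k^{\text{th}}$ roots of unity into the evaluation of a polynomial of degree $<k$, which is exactly the setting of Lemma~\ref{ST_l3}; everything else is a standard Cauchy--Schwarz against the geometric series $\sum_m r^{2mk}=(1-r^{2k})^{-1}$.
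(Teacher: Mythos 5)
Your proof is correct, and it takes a mildly different route from the paper's, essentially transposing where the two inequalities are applied. The paper decomposes $f(z)=\sum_{l\ge 0}z^{kl}P_l(z)$ into blocks $P_l$ of degree less than $k$, uses the triangle inequality for $\|\cdot\|_k$ over the block index $l$ (since $z^{kl}=1$ at the roots of unity), applies Lemma~\ref{ST_l3} to each $\sigma_rP_l$ together with $\|\sigma_rP_l\|_{H^2}\le\|P_l\|_{H^2}$, and only then invokes Cauchy--Schwarz over $l$ against $\sum_l r^{2kl}$. You instead exploit the same aliasing phenomenon in one shot: folding the Taylor coefficients modulo $k$ shows that $\sigma_rf$ agrees at the $k^{\text{th}}$ roots of unity with a single polynomial $P$ of degree less than $k$, so Lemma~\ref{ST_l3} gives the exact identity $\|\sigma_rf\|_k^2=\sum_l|b_l|^2$, and Cauchy--Schwarz is then applied within each residue class over $m$. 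The payoff of your version is that the only inequality is the final Cauchy--Schwarz, so it is marginally sharper in intermediate form (you could retain the factors $r^{2l}$ before discarding them), while the paper's version avoids manipulating the folded coefficients directly. One small point you leave implicit: the regrouping of $\sum_n a_nr^n\omega_k^{jn}$ into the sums $b_l$ needs absolute convergence, which holds since $\sum_n|a_n|r^n\le\|f\|_{H^2}(1-r^2)^{-1/2}<\infty$ for $0<r<1$; it is worth saying this in one line.
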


\begin{proof}
We can expand $f\in H^2$, $f=\sum_{n=0}^{\infty}c_nz^n$ into an orthogonal series
$$
f(z)=\sum_{l=0}^{\infty}z^{kl}P_l(z),
$$
where $P_l(z)=\sum_{j=0}^{k-1}c_{j+kl}z^j$ is a polynomial of degree less than $k$.
Then
\begin{align*}
\|\sigma_rf\|_k\le\sum_{l=0}^{\infty}r^{kl}\|\sigma_rP_l\|_k
=\sum_{l=0}^{\infty}r^{kl}\|\sigma_rP_l\|_{H^2}
\le\sum_{l=0}^{\infty}r^{kl}\|P_l\|_{H^2}\\
\le\biggl(\sum_{l=0}^{\infty}r^{2kl}\biggr)^{1/2}\biggl(\sum_{l=0}^{\infty}\|P_l\|_{H^2}^2\biggr)^{1/2}
=\frac{\|f\|_{H^2}}{(1-r^{2k})^{1/2}}.
\end{align*}
\end{proof}

\begin{lemma}\label{ST_l5}
The following inequalities hold true for all $f\in H^2$
\begin{equation}\label{ST_eq5}
\|f\|_{H^2}\le\sup_{k\in\mathbb{N}}\|\sigma_{1-1/k}f\|_k\le\frac{\|f\|_{H^2}}{(1-e^{-2})^{1/2}}.
\end{equation}
\end{lemma}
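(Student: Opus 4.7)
The plan is to prove the two inequalities in~\eqref{ST_eq5} separately. For the right-hand inequality, I would apply Lemma~\ref{ST_l4} with $r=1-1/k$ to obtain $\|\sigma_{1-1/k}f\|_k\le\|f\|_{H^2}/(1-(1-1/k)^{2k})^{1/2}$, and then use the elementary estimate $(1-1/k)^{2k}\le e^{-2}$ (which follows from $\ln(1-x)\le -x$) to replace the denominator by $(1-e^{-2})^{1/2}$ uniformly in $k$. Taking the supremum over $k$ then yields the upper bound.

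For the left-hand inequality I would approximate $f$ by its Taylor partial sums $f_N=\sum_{n=0}^{N-1}c_nz^n$. The key observation is that for every $k>N$ the dilate $\sigma_{r_k}f_N$ is a polynomial of degree less than $k$, so Lemma~\ref{ST_l3} applies as an exact identity and gives $\|\sigma_{r_k}f_N\|_k=\|\sigma_{r_k}f_N\|_{H^2}$. Since $r_k=1-1/k\to 1$, the finite sum $\sum_{n=0}^{N-1}|c_n|^2 r_k^{2n}$ converges to $\|f_N\|_{H^2}^2$ as $k\to\infty$. Combining the reverse triangle inequality $\|\sigma_{r_k}f\|_k\ge\|\sigma_{r_k}f_N\|_k-\|\sigma_{r_k}(f-f_N)\|_k$ with the right-hand inequality just proved (applied to the tail $f-f_N$) yields $\liminf_{k\to\infty}\|\sigma_{r_k}f\|_k\ge\|f_N\|_{H^2}-\|f-f_N\|_{H^2}/(1-e^{-2})^{1/2}$. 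Letting $N\to\infty$, so that $\|f_N\|_{H^2}\to\|f\|_{H^2}$ and $\|f-f_N\|_{H^2}\to 0$, finishes the lower bound.

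The main obstacle is finding the right route to the lower bound. A direct expansion of $\|\sigma_{r_k}f\|_k^2$ splits into a diagonal part $\sum_n|c_n|^2 r_k^{2n}$, which tends to $\|f\|_{H^2}^2$, and off-diagonal cross-terms indexed by distinct pairs with $n\equiv m\pmod k$; since $r_k^k\to e^{-1}$, these cross-terms need not vanish in the limit and in general contribute an unknown-sign quantity of size comparable to $\|f\|_{H^2}^2$, so no clean estimate emerges from that route. The truncation strategy sidesteps the issue by exploiting Lemma~\ref{ST_l3} as an exact identity on the polynomial part, where no cross-terms of this kind arise, and using Lemma~\ref{ST_l4} to absorb the tail uniformly.
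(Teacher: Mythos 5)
Your proposal is correct and follows essentially the same route as the paper: the upper bound comes from Lemma~\ref{ST_l4} with $r=1-1/k$ together with $(1-1/k)^{2k}\le e^{-2}$, and the lower bound comes from polynomial approximation (the paper uses an arbitrary polynomial $P$ with $\|f-P\|_{H^2}<\varepsilon$, you use Taylor partial sums), exploiting Lemma~\ref{ST_l3} exactly for $k$ large and absorbing the tail via the already-proved upper bound. The only cosmetic difference is that you pass to a $\liminf_{k\to\infty}$ while the paper works directly with suprema and an $\varepsilon$-argument; both are valid.
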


\begin{proof}
Since $(1-\frac1k)^k\uparrow e^{-1}$, we can easily obtain the upper estimate by using lemma \ref{ST_l4}
$$
\|\sigma_{1-1/k}f\|_k\le\frac{\|f\|_{H^2}}{(1-(1-\frac1k)^{2k})^{1/2}}\le\frac{\|f\|_{H^2}}{(1-e^{-2})^{1/2}}.
$$

To prove the lower estimate, we initially check it for an arbitrary polynomial $P(z)$, $\text{deg}\,P=N$. According to lemma \ref{ST_l3}, we have
$$
\sup_{k\in\mathbb{N}}\|\sigma_{1-1/k}P\|_k\ge\sup_{k>N}\|\sigma_{1-1/k}P\|_{H^2}=\|P\|_{H^2}.
$$

Now assume that $f\in H^2$ is an arbitrary function and select a polynomial $P$ such that $\|f-P\|_{H^2}<\varepsilon$.
Using the triangle inequality and the proof above, we can obtain
\begin{align*}
\sup_{k\in\mathbb{N}}\|\sigma_{1-1/k}f\|_k\ge\sup_{k\in\mathbb{N}}\|\sigma_{1-1/k}P\|_k-\sup_{k\in\mathbb{N}}\|\sigma_{1-1/k}(f-P)\|_k\\
\ge\|P\|_{H^2}-\frac{\varepsilon}{(1-e^{-2})^{1/2}}\ge\|f\|_{H^2}-\varepsilon-\frac{\varepsilon}{(1-e^{-2})^{1/2}}.
\end{align*}
The proof is complete as $\varepsilon$ tends to $0$.
\end{proof}

Now we have all the ingredients to prove theorem 1.

Let us denote by $\ell^2_k$ a $k$-dimensional Hilbert space equipped with the norm
$$
\|c\|_2:=\biggl(\sum_{j=0}^{k-1}|c_j|^2\biggr)^{1/2}.
$$

Throughout the proof, $X$ stands for a space with a mixed norm
$$
X=\ell^1(\ell^2_k)=\biggl(\bigoplus_{k=1}^{\infty}\ell_k^2\biggr)_{\ell^1}.
$$

Let
$$
I:=\{(k,j):j=0,1,\dots,k-1 \quad \text{and} \quad k=1,2,\dots\}
$$
be an index set. So $X=\ell^1(\ell^2_k)$ is the space of families $x=\{x_{k,j}\}_{(k,j)\in I}$ such that
$$
\|x\|_{1,2}:=\sum_{k=1}^{\infty}\biggl(\sum_{j=0}^{k-1}|x_{k,j}|^2\biggr)^{1/2}<\infty.
$$

Clearly, the dual space of $X$
$$
Y=\ell^{\infty}(\ell_k^2)=\biggl(\bigoplus_{k=1}^{\infty}\ell_k^2\biggr)_{\ell^{\infty}}
$$
is the space of families $y=\{y_{k,j}\}_{(k,j)\in I}$ satisfying
$$
\|y\|_{\infty,2}:=\sup_{k\in\mathbb{N}}\biggl(\sum_{j=0}^{k-1}|y_{k,j}|^2\biggr)^{1/2}<\infty
$$
with the standard duality
$$
\langle x,y\rangle=\sum_{k=1}^{\infty}\sum_{j=0}^{k-1}x_{k,j}y_{k,j}.
$$

Inequality \eqref{ST_eq5} of lemma \ref{ST_l5} implies that the normalized sequence
$$
\widehat{K}_{\lambda_n}=(1-|\lambda_n|^2)^{1/2}K_{\lambda_n}, \qquad n=1,2,\dots,
$$
fulfills the frame inequalities
$$
A\|g\|_{H^2}\le\|\{\langle\widehat{K}_{\lambda_n},g\rangle\}_{n=1}^{\infty}\|_{\infty,2}\le B\|g\|_{H^2}
$$
when $\{\lambda_n\}_{n=1}^{\infty}$ is defined by \eqref{ST_eq2}, because in this case we have
$$
(1-|\lambda_{k,j}|^2)^{1/2}=(1-(1-\tfrac1k)^2)^{1/2}\asymp\frac{1}{\sqrt{k}}, \qquad k=1,2,\dots,
$$
and by definition
\begin{align*}
\sup_{k\in\mathbb{N}}\|\sigma_{1-1/k}g\|_k
=\sup_{k\in\mathbb{N}}\biggl(\frac1k\sum_{j=0}^{k-1}|g(\lambda_{k,j})|^2\biggr)^{1/2}
\asymp \|\{\langle\widehat{K}_{\lambda_n},g\rangle\}_{n=1}^{\infty}\|_{\infty,2}.
\end{align*}

Applying lemma \ref{ST_l2} we conclude that for each function $f\in H^2(\mathbb{D})$ there exist coefficients $x_n=x_{k,j}$ such that
$$
\sum_{k=1}^{\infty}\biggl(\sum_{j=0}^{k-1}|x_{k,j}|^2\biggr)^{1/2}<\infty
$$
and the representation is valid
$$
f=\sum_{n=1}^{\infty}x_n\widehat{K}_{\lambda_n}=\sum_{n=1}^{\infty}x_n(1-|\lambda_n|^2)^{1/2}K_{\lambda_n}.
$$
This completes the proof.

\begin{remark}\rm
Theorem \ref{ST_t1} was announced without proof in our paper \cite{ST}.
\end{remark}

\end{document}